\newcommand{\ZZ}{\mathbb Z}
\newcommand{\RR}{\mathbb R}
\DeclareMathOperator{\mean}{mean}
\DeclareMathOperator{\degree}{degree}
\title{%
    Some statistics about tropical sandpile model 
    }
\author{%
    Nikita Kalinin, Yulieth Prieto
    }
\abstract{%
    Tropical sandpile model (or linearized sandpile model) is the only known continuous geometric model exhibiting self-organised criticality. This model represents the scaling limit behavior of a small perturbation of the maximal stable sandpile state on a big subset of $\ZZ^2$. Given a set $P$ of points in a compact convex domain $\Omega\subset \RR^2$ this linearized model produces a tropical polynomial $G_P{\bf 0}_\Omega$. 

Here we present some quantitative statistical characteristics of this model and some speculative explanations. Namely, we study the dependence between the number $n$ of randomly dropped points $P=\{p_1,\dots,p_n\}\subset[0,1]^2=\Omega$ and the degree of the tropical polynomial $G_{P}{\bf 0}_\Omega$. We also study the distributions of the coefficients of $G_{P}{\bf 0}_\Omega$ and the correlation between them. This paper's main (experimental) result is that the tropical curve $C(G_{P}{\bf 0}_\Omega)$ defined by $G_{P}{\bf 0}_\Omega$ is a small perturbation of the standard square grid lines. This explains a previously known fact that most of the edges of the tropical curve $C(G_{P}{\bf 0}_\Omega)$ are of directions $(1,0),(0,1),(1,1),(-1,1)$. 

The main theoretical result is that $C(G_{P}{\bf 0}_\Omega)\setminus (P\cap \partial\Omega)$, i.e. the tropical curve in $\Omega^\circ$ with marked points $P$ removed, is a tree. 
    }
\keywords{%
    tropical geometry, power law, genus, sandpile
    }
\begin{document}


\section{Abelian sandpile model}

A mathematical object is interesting if it appears in several contexts under different disguises.  The sandpile model evolves by a very simple rule. Hence it is not unexpected that it was discovered independently at least three times: in number theory, combinatorics, and physics. Let us define it.

\begin{enumerate}
    \item Let $\Omega$ be a big compact convex subset of $\RR^2$. Let $\Gamma = \Omega\cap \ZZ^2$, naturally, $\Gamma$ becomes a graph when for each point $(i,j)$ we draw edges from $(i,j)$ to its neighbors $(i+1,j),(i-1,j),(i,j-1),(i,j+1)$. 
\begin{definition}A {\it state} of a sandpile is a function $\phi:\Gamma\to \ZZ_{\geq 0}$, $\phi(v)$ being the number of grains at $v\in\Gamma$. If $\phi(v)\geq 4$, we can {\it topple} $v$ by redistributing four grains from $v$ equally to its four neighbors. Sand falling outside of $\Omega$ disappears, that guarantees that any {\it relaxation} (doing toppling while it is possible) eventually terminates. 
\end{definition}
It is a basic feature of the model, that the result $\phi^\circ$ does not depend on a particular choice of relaxation.

If $\Omega$ is a big rectangle, then the distribution of the sizes of {\it avalanches} (all vertices that topple at least once during a relaxation) caused by subsequent grain dropping at random vertices obeys a {\it power law}, thus the sandpile serves as an example of the self-organized criticality (SOC). That was experimentally observed (and the term SOC was coined) in \cite{BTW} and was proven only recently \cite{bhupatiraju2016inequalities}. A key property of the sandpile is that the recurrent states of the above dynamic correspond to the {\it spanning trees} of the underlying graph $\Gamma$ \cite{Dhar}; hence the distribution of avalanches is related to random spanning forests on $\ZZ^2$, see \cite{MR2077255} for details. Sandpiles were studied extensively in physics literature: their universality \cite{paczuski1996universality,chessa1999universality}, and algebraic properties \cite{Dhar}, and the power spectra of its avalanches \cite{laurson2005power}.

\item Another source of sandpiles is the following problem: given a set of vectors, let us try to minimize their sum with $\pm1$ coefficients. In \cite{spencer1986balancing}, a certain combinatorial game was defined to approach this problem. In \cite{bjorner1991chip},  this game was generalized, and the so-called {\it chip-firing} game was defined: a collection of {\it chips} at the vertices of a finite graph evolves by the same toppling rule: we topple ({\it fire}) a vertex if the number of chips at it is at least its valency. 

\item The set of recurrent configurations of the above dynamic has a group structure, the {\it critical group} of the underlying graph. This observation leads to fruitful connections between sandpiles, Tutte polynomials, and matroid theory \cite{Mer,biggs1999chip}.  One of the results in this direction is the Riemann-Roch theorem for divisors on graphs \cite{MR2355607}, exploiting the similarity between Jacobians of Riemann surfaces and graphs, cf. \cite{MR1478029}. The toppling rule essentially boils down to a {\it discrete Laplacian operator}, and the sandpile group is the critical group of the dual graph of a degeneration of an algebraic curve \cite{MR1019714,lorenzini1991finite} (this study of divisor under a degeneration can be traced back to \cite{Raynaud:1970fv}, Proposition~8.1.2). 
\end{enumerate}

We recommend the following introductory reading about sandpiles: \cite{MR2246566,ivashkevich1998introduction,redig,jarai2014sandpile,divsand}.


\subsection{Self-reproducing patterns and tropical curves}
 Patterns in sandpiles, resembling tropical curves, were studied by S. Caracciolo, G. Paoletti, and A. Sportiello, \cite{firstsand}. Later, the corresponding dynamic after the scaling limit was rigorously defined under the name of ``tropical sandpile model'' by N. Kalinin and M. Shkolnikov, whose main result is that the deviation set of $\phi^\circ$ (where $\phi = 3+\sum_{p\in P} \delta_{p}$, $P\subset \Gamma$) is a tropical analytic curve that passes through $P$; see\cite{us} (for a short version read \cite{announce}). Recently is has been shown that the tropical sandpile model exhibits self-organised criticality behaviour \cite{sandcomputation}.

\begin{figure}[h!]
\centering
\subfigure{\includegraphics[width=0.23\textwidth]{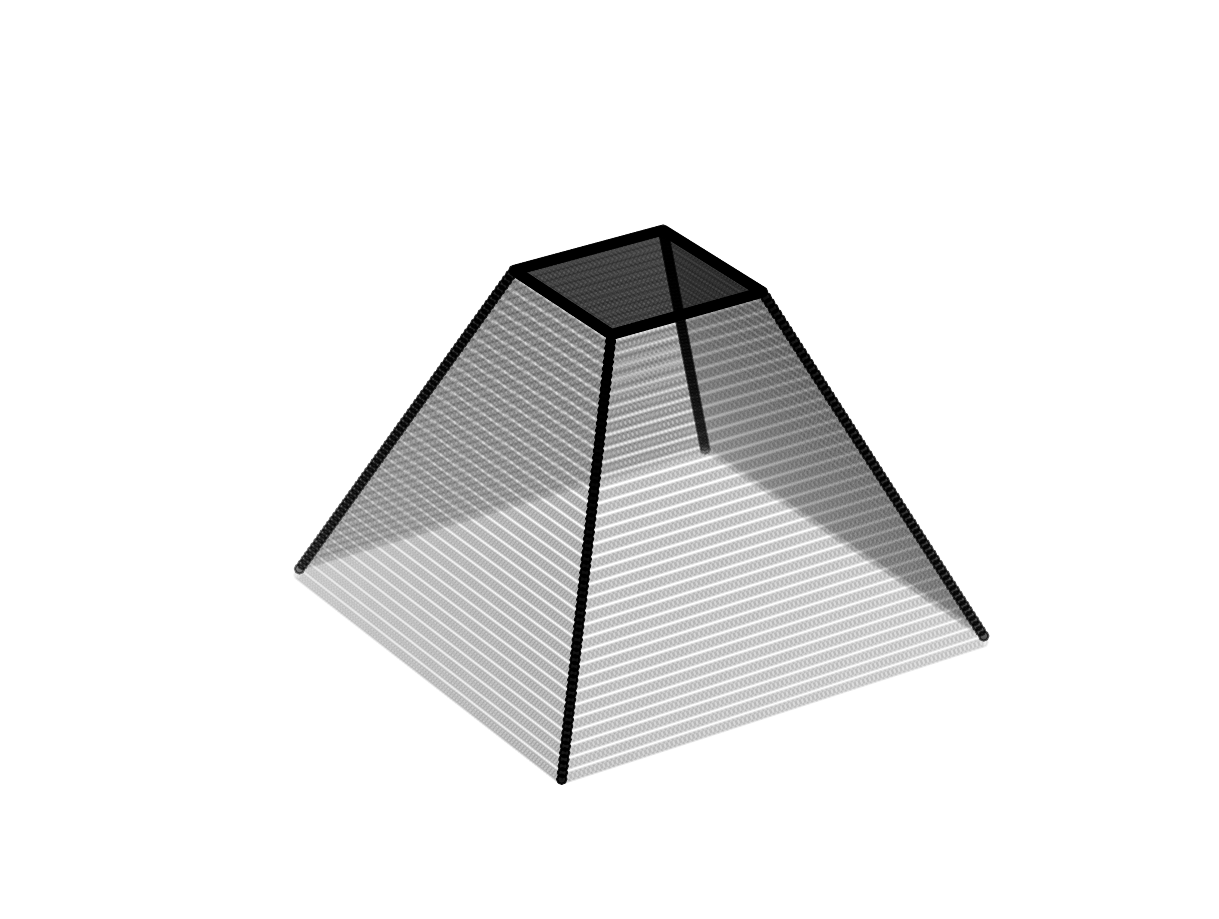}}
\subfigure{\includegraphics[width=0.23\textwidth]{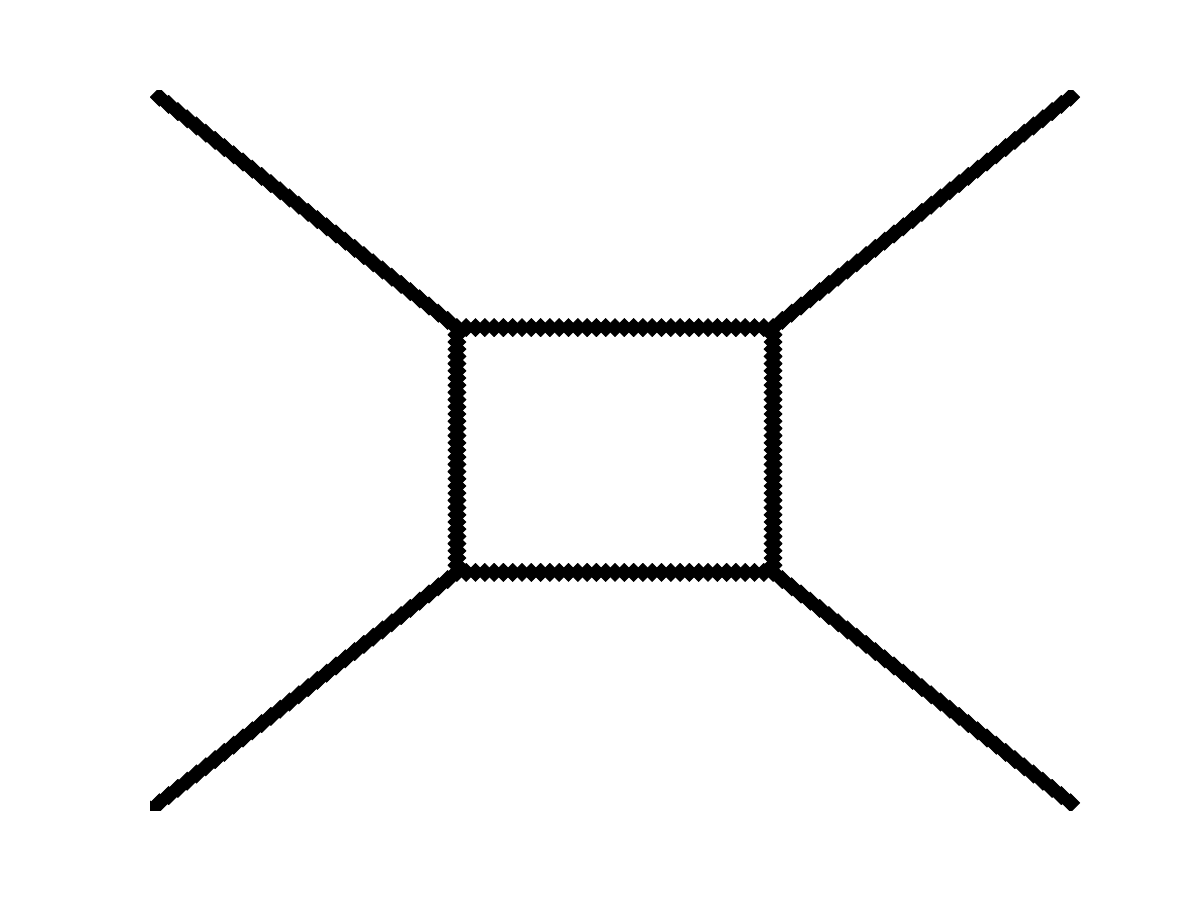}}
\caption{$\Omega$-tropical curve $C(f)$ corresponding to
$f(x,y)=\min(1/3,x,y,1-x,1-y)$.}
\label{ex:1}
\end{figure}

\section{The tropical sandpile model}

Tropical curves are graphs of a special type appearing as degenerations of Riemann surfaces (see also their connections with string theory \cite{tourkine2017tropical} and statistical physics \cite{angelelli2015tropical}), and are used to compute Gromov-Witten invariants, see introductory texts \cite{MR2902202,BIMS}. Here we need only the simplest version of a tropical curve, a planar tropical curve. 
Let $\Omega \subset \mathbb R ^2$ be a compact convex set.
\begin{definition}
An $\Omega$-{\it tropical series} is a function $f:\Omega\to\mathbb R$ such that $f \mid_{\Omega}\geq 0, f\mid_{\partial\Omega}= 0$ and there exist $c_{ij}\in\mathbb R$ such that for each point $ (x,y)\in \Omega^\circ$ we have
\begin{equation*}
\label{eq_series}
f(x,y)=\min\{c_{ij}+ix+jy| (i,j)\in \mathbb Z^2\}. 
\end{equation*}

The set of non-smooth points of $f$ is called an $\Omega$-{\it tropical analytic curve} and is denoted by $C(f)$.
\end{definition}

It is easy to see that the complement of $C(f)$ in $\Omega^\circ$ is divided into parts where only one monomial $c_{ij} + ix +jy$ is minimal.

\begin{definition}
The minimal canonical form of an $\Omega$-tropical series is the unique presentation with the minimal by inclusion set of monomials and minimal possible coefficients $c_{ij}$.
\end{definition}


Let $V(\Omega)$ be the set of all $\Omega$-tropical series. To each finite subset $P \subset \Omega^\circ$ we associate an operator $G_P: V(\Omega) \rightarrow V(\Omega)$ which associates to any $\Omega$-tropical curve and a set $P$ of points a new $\Omega$-tropical curve passing through $P$ as follows.

\begin{definition} Let $G_Pf(x,y)=\min \{g(x,y) \mid  g \in V(\Omega), \ g\geq f \textit{ and } P \subset C(g) \}.$
\end{definition}

Clearly, if $P \subset C(f)$ then $G_P f=f$. Suppose that $P = \{p\}$. Then, if $p$ belongs to a face where the monomial $c_{ij} + ix +jy$ is minimal, then $G_p$ just increases the coefficient $c_{ij}$, \cite{us_series}. On the level of associated $\Omega$-tropical curves, $G_p$ contracts the face which $p$ belongs to, see Figure \ref{fig_ShrinkPhi}.

\begin{figure*}[h]
    \centering

\begin{tikzpicture}[scale=0.4]
\draw[very thick](0,0)--++(1,1)--++(0,7)--++(4,0)--++(1,-1)--++(0,-6)--++(-5,0);
\draw[very thick](1,8)--++(-1,1);
\draw[very thick](5,8)--++(0,1);
\draw[very thick](6,7)--++(1,0);
\draw[very thick](6,1)--++(1,-1);
\draw(3,4)node{$\bullet$};
\draw(3,4)node[right]{$p$};
\draw(4.5,6.5)node{\Large$\Phi$};
\draw[->][very thick](8,4)--(10,4);

\begin{scope}[xshift=300]
\draw[very thick](0,0)--++(2,2)--++(0,5)--++(3,0)--++(0,-5)--++(-3,0);
\draw[very thick](2,7)--++(-2,2);
\draw[very thick](5,7)--++(0,2);
\draw[very thick](5,7)--++(2,0);
\draw[very thick](5,2)--++(2,-2);
\draw(3,4)node{$\bullet$};
\draw(3,4)node[right]{$p$};
\draw[->][very thick](8,4)--(10,4);
\end{scope}

\begin{scope}[xshift=600]
\draw[very thick](0,0)--++(3,3)--++(0,3)--++(1,0)--++(0,-3)--++(-1,0);
\draw[very thick](3,6)--++(-3,3);
\draw[very thick](5,7)--++(0,2);
\draw[very thick](5,7)--++(2,0);
\draw[very thick](5,7)--++(-1,-1);
\draw[very thick](4,3)--++(3,-3);
\draw(3,4)node{$\bullet$};
\draw(3,4)node[left]{$p$};
\end{scope}

\end{tikzpicture}
\caption{The operator $G_p$ shrinks the face $\Phi$ where $p$ belongs to. Note that combinatorics of the new curve can change when shrinking.} 
\label{fig_ShrinkPhi}
\end{figure*}
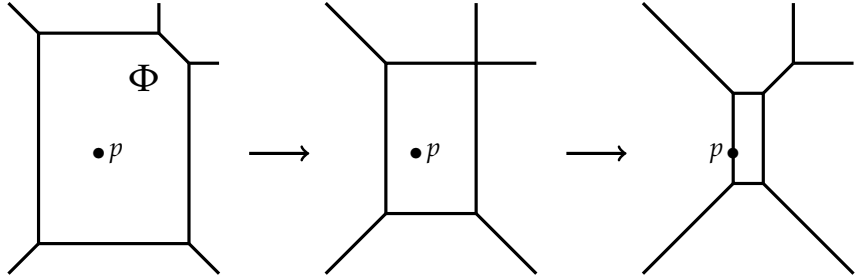

The operators $G_P$ appear in works of C. Vafa  under the name of ``breathing mode'', see \cite{vafa2012supersymmetric}.

The theory of tropical series, operators $G_P$, and all proofs of the aforementioned results can be found in \cite{us_series}. The $\Omega$-tropical curves recently appear in the study of Lagrangian submanifolds \cite{matessi2018lagrangian,mikhalkin2018examples,hicks2019tropical}. An introduction to tropical geometry can be found in \cite{BIMS,mikh2,maclagan2015introduction}.

Dynamics of $G_p$ in the case of one-dimensional tropical series was thoroughly studied in \cite{shkol}.

\section{Experiments and results}

We drop $n$ random uniformly chosen points $P=\{p_1,\dots,p_n\}$ to $\Omega=[0,1]^2$. In order to facilitate the computations, we fix $s\in\mathbb N$ and choose $n$ random points $p_i=(\frac{x}{s},\frac{y}{s})$ where $x,y\in [0,s]\cap \ZZ$ and study the $\Omega$-tropical series $G_P{\bf 0}_{[0,1]^2}$, i.e. $G_P$ applied to the identically zero function on $\Omega$. We assume that when $s$ is big enough, this model is close to the continuous model. 

Let
\begin{eqnarray*}
S &:=& \{ 50,100,200,300,500,1000 \},\\
N &:=& \{ 100,500,1000,1500,2000,2500, 5000 \},
\end{eqnarray*}
For every $s \in S$ and $n \in N$, we run experiments as explained above and study the following characteristic of $G_P0_\Omega$. 

\begin{figure}[h]
\centering
\subfigure{\includegraphics[width=0.33\textwidth]{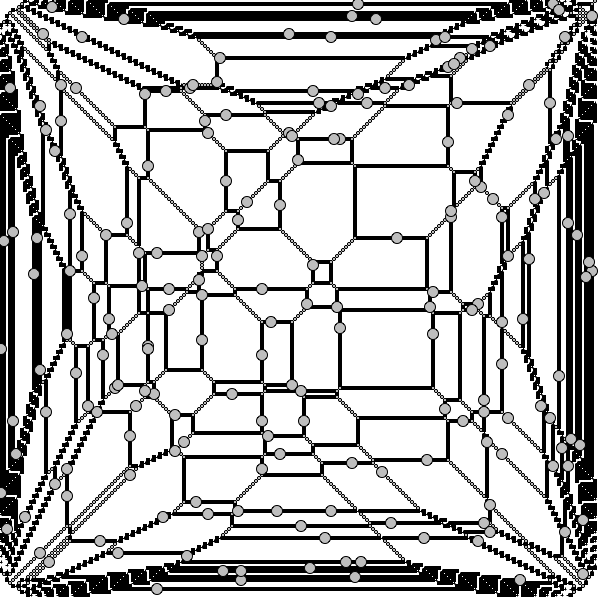}}
\caption{A typical example of $G_P{\bf 0}_{[0,1]^2}$.}
\label{ex:2}
\end{figure}

%

\subsection{The minimal degree of \texorpdfstring{$G_P0$}{GP0}}

\begin{definition}
If $f(x,y)=\min\limits_{(i,j)\in \mathcal{A}}(c_{ij}+ix+jy)$, then the (tropical) degree of $f$ is defined as
\begin{equation*}
\mathtt{Degree}(f)=\max\limits_{(i,j) \in \mathcal{A}} (|i|+|j|).
\end{equation*}
\end{definition}
This degree is the analog of the degree of a polynomial in two variables in algebraic geometry. It is known that a curve $\{F(x,y) = 0\}$ where $F$ is a polynomial in two variables of degree $d$ can be drawn through $\frac{(d+1)(d+2)}{2}-1$ generic points (e.g. a line through two points, a conic through five points). So it is reasonable to expect that $\mathtt{Degree}(G_{P}0_{[0,1]^2})$ should be of order $\sqrt{|P|}=\sqrt n$.

According to simulations for $S,N$ as above, we numerically observe that the families $f_{\texttt{Degree}}^{\min}$ and $f_{\texttt{Degree}}^{\texttt{mean}}$ defined as follows 
\begin{eqnarray*}
f_{\texttt{Degree}}^{\min}(n) &=& \frac{\min (\degree(s,n))}{n^{\frac{1}{2}-\epsilon}} \\
f_{\texttt{Degree}}^{\texttt{mean}}(n)&=& \frac{\mean (\degree(s,n))}{n^{\frac{1}{2}-\epsilon}}
\end{eqnarray*}
converge to constants as $s$ and $n$ tend to infinity, and $\epsilon$ is a small number, see Figure \ref{im:degree}.

\begin{figure}[h!]
\centering
\includegraphics[width=0.43\textwidth]{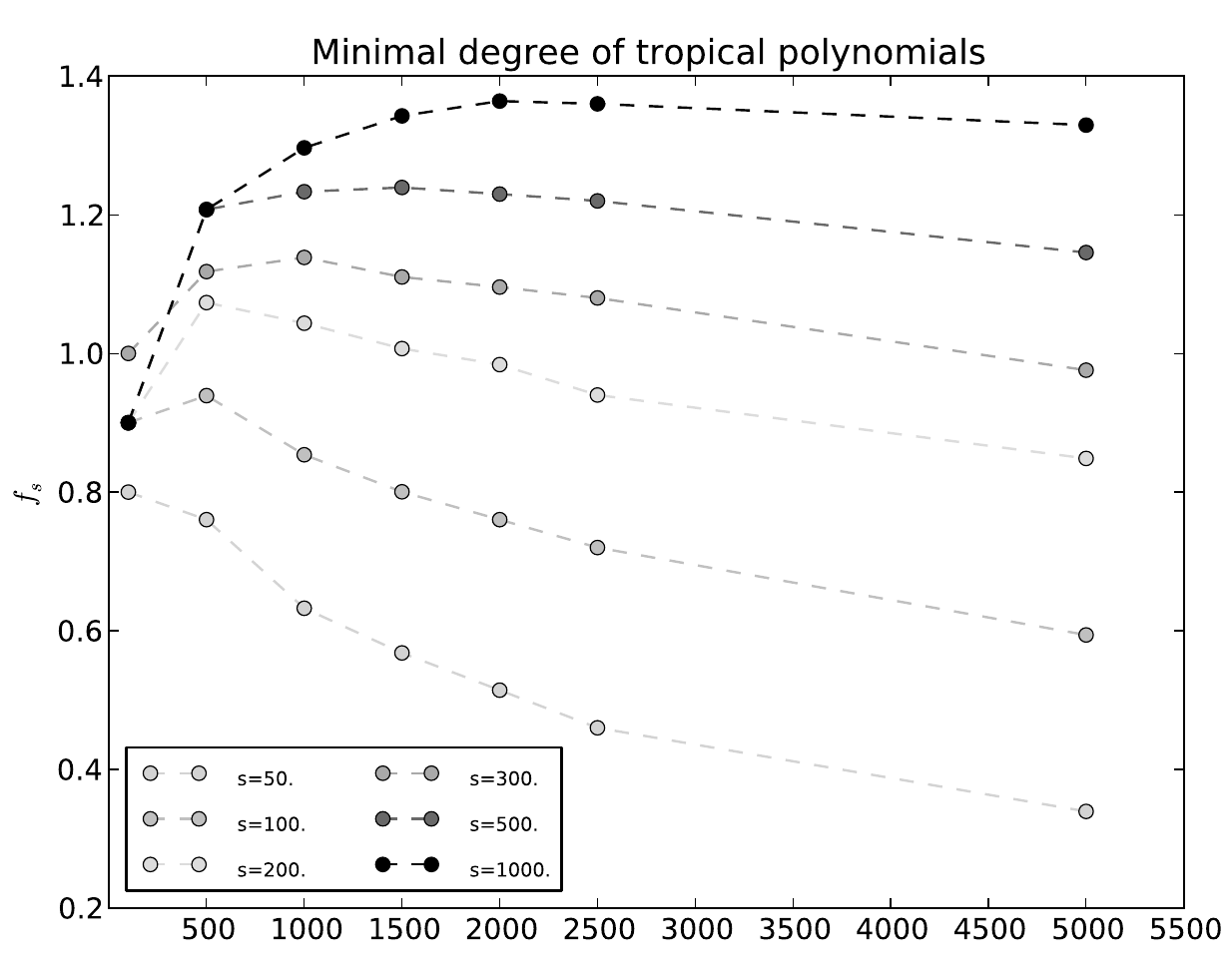}
\includegraphics[width=0.43\textwidth]{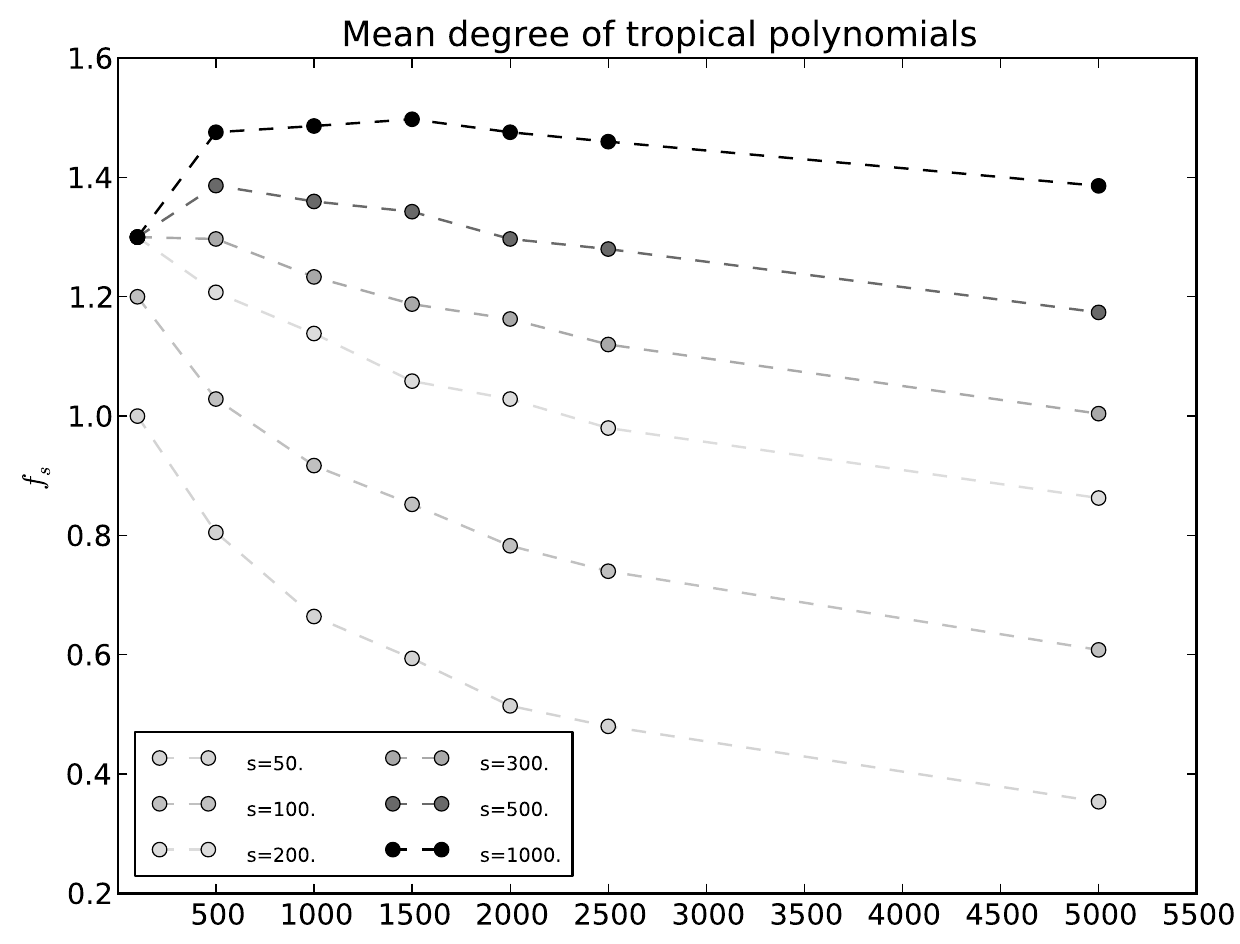}
\caption{Minimum and mean degree of $5000$ experiments for each $s \in S$ (different curves) and $n \in N$ ($x$-axe).}
\label{im:degree}
\end{figure}

In other words, the limit of the degree distribution depends only on the number of randomly chosen points and does not depend on the size of square if it is large enough. Moreover, most of the degrees are approximately $\sqrt{n}$, so we conjecture a kind of concentration of measure (the standard deviation of the distribution of degrees tends to zero).

\subsection{Genus}
As in the classical algebraic geometry we can define the genus of a curve. Together with the degree, these are two main invariants of plane curves.
\begin{definition}
The genus of an $\Omega$-tropical curve $C(f)$ is the number of connected components of $\Omega^\circ\setminus C(f)$ whose closure does not intersect $\partial\Omega$.
\end{definition}

For example, the genus of the curve on Figure~\ref{ex:2} is one, as well as the genera of the curves on Figure~\ref{fig_ShrinkPhi}. In classical algebraic geometry, the genus represents the dimension of the deformations of a curve in a given class of curves. Thus, in our setting where the dimension of deformations of a curve $C(G_P0_\Omega)$ is naturally the number $|P|$ of points, we would expect that the genus of $G_P0_\Omega$ is equal to $P$. Indeed, this is the case for a generic collection $P$ of points.

\begin{theorem}
If $P$ is a generic collection of points in $\Omega^\circ$, then the genus of $C(f)$ (where $f=G_P0_\Omega$) is equal to $P$.
\end{theorem}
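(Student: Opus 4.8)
The plan is to turn the genus into a purely graph-theoretic count of independent cycles of the curve and then to spend the $n$ marked points one per cycle.

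First I would set up an Euler-characteristic reduction. View $G:=C(f)\cap\Omega^\circ$ as a finite graph embedded in the open disc $\Omega^\circ$, its vertices being the $0$-dimensional strata of $C(f)$ inside $\Omega^\circ$ and its edges the segments joining them. Since $C(f)\subset\Omega$ and meets $\partial\Omega$ in only finitely many points, every bounded (enclosed) complementary region of $G$ is cut out by curve-edges alone, so its closure misses $\partial\Omega$; conversely a component of $\Omega^\circ\setminus C(f)$ whose closure misses $\partial\Omega$ is such an enclosed region. Hence the genus equals the number of bounded faces of $G$, and Euler's formula for a planar graph with $c$ components, $V-E+F=1+c$, turns this into $\mathrm{genus}\,C(f)=E-V+c=b_1(G)$. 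It therefore suffices to prove $b_1(G)=n$.

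Next I would normalise the generic picture using the operator theory of \cite{us_series}: for $P$ in general position the dual subdivision of $f$ is unimodular, $C(f)$ is trivalent, and, crucially, each marked point $p_i$ lies in the interior of a \emph{distinct} edge $e_i$ of $G$ (no $p_i$ is a vertex, and no two share an edge). Because removing an interior point of $e_i$ deformation-retracts the two resulting stubs onto the endpoints of $e_i$, one has a homotopy equivalence $G\setminus P\simeq G-\{e_1,\dots,e_n\}$. Now I would invoke the structural result quoted in the abstract, that $G\setminus P$ is a tree; thus $G-\{e_1,\dots,e_n\}$ is connected with $b_1=0$. Deleting the $e_i$ one at a time keeps the graph a supergraph of this tree, hence connected at every stage, so no $e_i$ is a bridge; and deleting a non-bridge edge lowers $b_1$ by exactly one. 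The $n$ deletions therefore bring $b_1(G)$ down to $0$ in $n$ unit steps, giving $b_1(G)=n$ and, with the first step, $\mathrm{genus}\,C(f)=n$.

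Where the genuine difficulty sits depends on how much one is willing to assume. Granting the tree theorem, the only remaining work is the genericity bookkeeping of the previous paragraph, that $P$ can be chosen so each point sits on a distinct edge interior and that the enclosed faces stay away from $\partial\Omega$, which I expect to be routine from the structure of $G_P$. If one instead wants a self-contained argument, I would run the same count inductively on $f_k=G_{\{p_1,\dots,p_k\}}\mathbf{0}_\Omega$, and there the real obstacle appears: one must show that a generic new point raises $b_1$ by \emph{exactly} one. The danger runs in both directions, since a point could slide into an already bounded face and merely shrink it (Figure~\ref{fig_ShrinkPhi}), contributing nothing, or the shrinking face could pinch off more than one new region, or interact non-locally through the iterative nature of $G_P$ and undo an earlier cycle. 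Ruling all of this out, by a careful local analysis of how the contracting face of Figure~\ref{fig_ShrinkPhi} separates the plane together with a genericity argument excluding every coincidence of coefficients, is the hardest point; everything else is bookkeeping.
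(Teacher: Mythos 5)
Your reduction of the genus to $b_1$ of the graph $G=C(f)\cap\Omega^\circ$, and the edge-deletion count showing that the tree property plus ``one marked point per edge'' forces $b_1(G)=n$, are correct -- indeed more explicit than the paper's own terse conclusion (``hence it is a tree, hence $|P|$ is equal to the genus''). But the proposal has a genuine gap, exactly where you placed your bet: the tree property of $G\setminus P$ is not a citable prerequisite here. The sentence in the abstract that you invoke is an advertisement for this very theorem; in the paper, the statement that $C(f)\setminus(P\cup\partial\Omega)$ is a tree is established \emph{inside} the proof of the genus formula, so taking it as input makes your argument circular. What is missing are precisely the two ideas that carry the paper's proof. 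First, acyclicity: if $X=C(f)\setminus(P\cup\partial\Omega)$ contained a cycle, then the coefficients $a_{ij}$ of the monomials that are minimal on the enclosed faces could all be decreased by a small $\epsilon$; the resulting tropical series is still nonnegative and still contains $P$ in its corner locus, contradicting the minimality built into the definition of $f=G_P0_\Omega$. Second, connectivity: if $X'$ is a connected component of $X$ and $p\in P$ lies in its closure, a generic perturbation of $p$ orthogonal to its edge must move $X'$; if $X'$ contained only one of the two local branches of $C(f)$ at $p$, one leg of the curve could be moved while all the others stay fixed, which is forbidden by the tropical Menelaus/momentum condition \cite{Mikhalkin:2015kq,guide,Yoshitomi:kq}. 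Hence the closure of $X'$ is an entire connected component of $C(f)$, and since $C(f)$ is connected, $X$ is connected, hence a tree.

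Your fallback plan -- induction on points, showing each generic new point raises $b_1$ by exactly one -- is indeed the hard road, and you correctly identified its dangers (Figure~\ref{fig_ShrinkPhi} shows that $G_p$ may merely shrink a face); but the paper does not take that road, and nothing in your proposal closes it. As it stands, you have proved only the implication ``tree theorem $\Rightarrow$ genus $=|P|$'', which is the bookkeeping half; the minimality argument excluding cycles and the Menelaus-type rigidity argument giving connectivity -- the actual mathematical content of the paper's proof -- are absent.
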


\begin{proof} Firstly, note that every connected component of $\Omega^\circ\setminus C(f)$, whose boundary does not intersect $\partial\Omega$, contains a point $p\in P$ in its closure. Indeed, if it is not the case, consider the monomial $a_{ij}+ix+jy$ which is minimal on this connected component and note that $a_{ij}$ could be decreased by a small amount, and the corresponding tropical series would still contain $P$ in its corner locus and be greater than zero which contradicts the minimality of $f=G_P0_\Omega$.

Secondly, because of the genericity of the set $P$, we may assume that a small perturbation of points in $P$ does not change the combinatorial type of the tropical curve. Consider $X=C(f)\setminus (P\cup \partial\Omega)$. It does not contain cycles by the above arguments, and to prove that $X$ is a tree we only need to establish its connectivity. Suppose the contrary and take any connected component of $X$: $X$ is a tree with endpoints on the boundary of $\Omega$ and in $P$. Consider any point $p\in P$ which is contained in the closure of $X$. Note that if we perturb $P$ by moving $p$ orthogonally to the edge containing it, $X$ should be also perturbed. This means that $X$ must contain both connected components of the intersection of a small neighborhood of $p$ with $C(f)$. Otherwise it would be possible to slightly move one of the legs of a tropical curve without moving any other leg (which would contradict the tropical Menelaus theorem \cite{Mikhalkin:2015kq} also known as the tropical momentum theorem \cite{guide}, or the moment condition \cite{Yoshitomi:kq}). Thus the closure of $X$ is a connected component of $C(f)$ and recall that $C(f)$ is connected. Therefore $C(f)\setminus (P\cup \partial\Omega)$ is connected from the very beginning, hence it is a tree, showing that $|P|$ is equal to the genus of $C(f)$.
\end{proof}

\subsection{Behaviour of coefficients \texorpdfstring{$c_{ij}$}{cij}}
Let $c_{ij}$ be the coefficient of $G_P0_{[0,1]^2}$, as in \eqref{eq_series}.
Let $\{ f_s^{ij} \}_{s \in S}$ be
\begin{equation*}
f_s^{ij}(n):= \frac{\mean ({c_{ij}}(s,n))}{sn^{\alpha}},
\end{equation*}
where $\alpha$ is a scaling parameter that we want to estimate, and $c_{ij}$ is one of $\{ c_{00}$, $c_{11}$, $c_{10}$, $c_{01} \}$. We numerically observe that $f_s^{ij}(n)$ converges to a constant function when $\alpha=1/2$ and $n,s$ tend to infinity. More plots can be found in \cite{yulieth}.

\subsection{Mean identity}

There exists a nice relation  between coefficients $c_{00}$,  $c_{11}$, $c_{10}$ and $c_{01}$.  Namely, writing $\overline c_{00}$ for the average of $c_{00}$ in the experiments, we obtain that
\begin{equation*}
\label{eq_simetrycenter}
\overline{c_{00}}(s,n)+\overline{c_{11}}(s,n)=\overline{c_{10}}(s,n)+\overline{c_{01}}(s,n).
\end{equation*}
is satisfied, see Figure \ref{im:Levine_id}. Note that it should be of order $\sqrt n$ a priori as all of its summands.
\begin{figure}[h]
\centering
\includegraphics[width=0.5\textwidth]{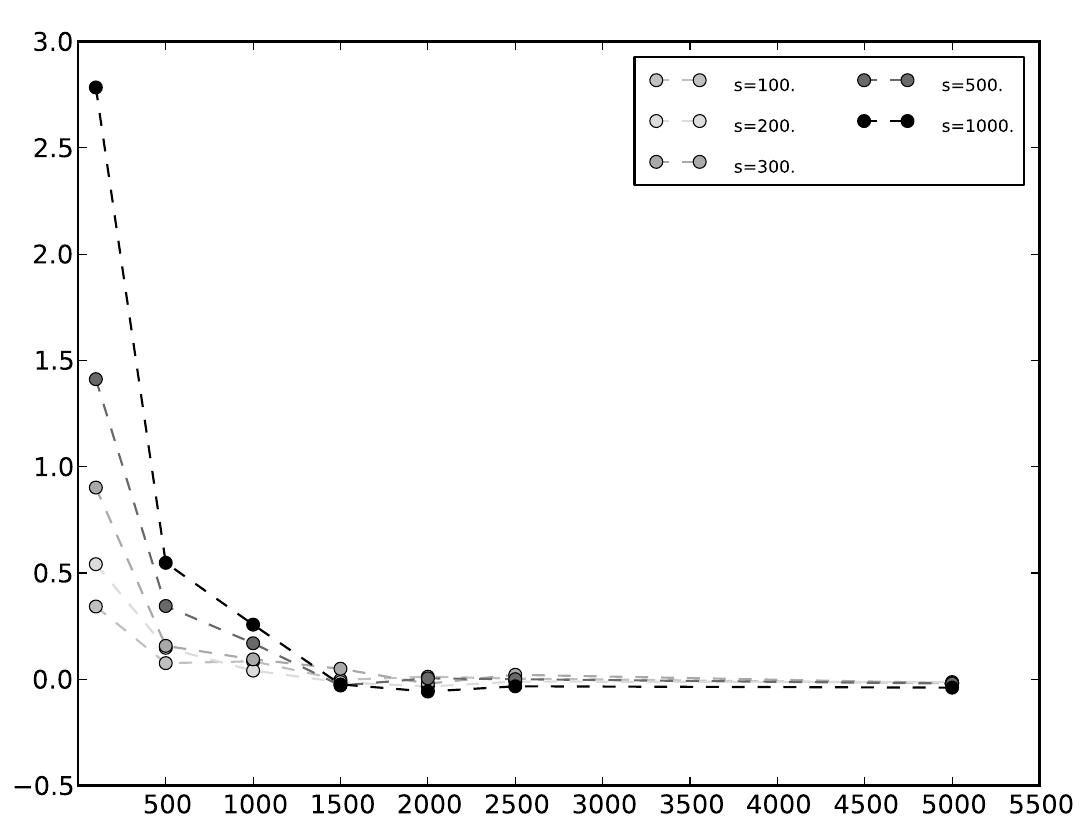}
\caption{Graph of $\overline{c_{00}}(s,n)+\overline{c_{11}}(s,n)-\overline{c_{10}}(s,n)-\overline{c_{01}}(s,n)$.}
\label{im:Levine_id}
\end{figure}

The $1/n$ of the standard deviation of values $c_{ij}$ is constant, see Figure \ref{im:sdcoo} for $f_s^{SD}(n)=\frac{SD(c_{00})}{sn}.$

\begin{figure}[h!]
\centering
    \includegraphics[width=0.5\textwidth]{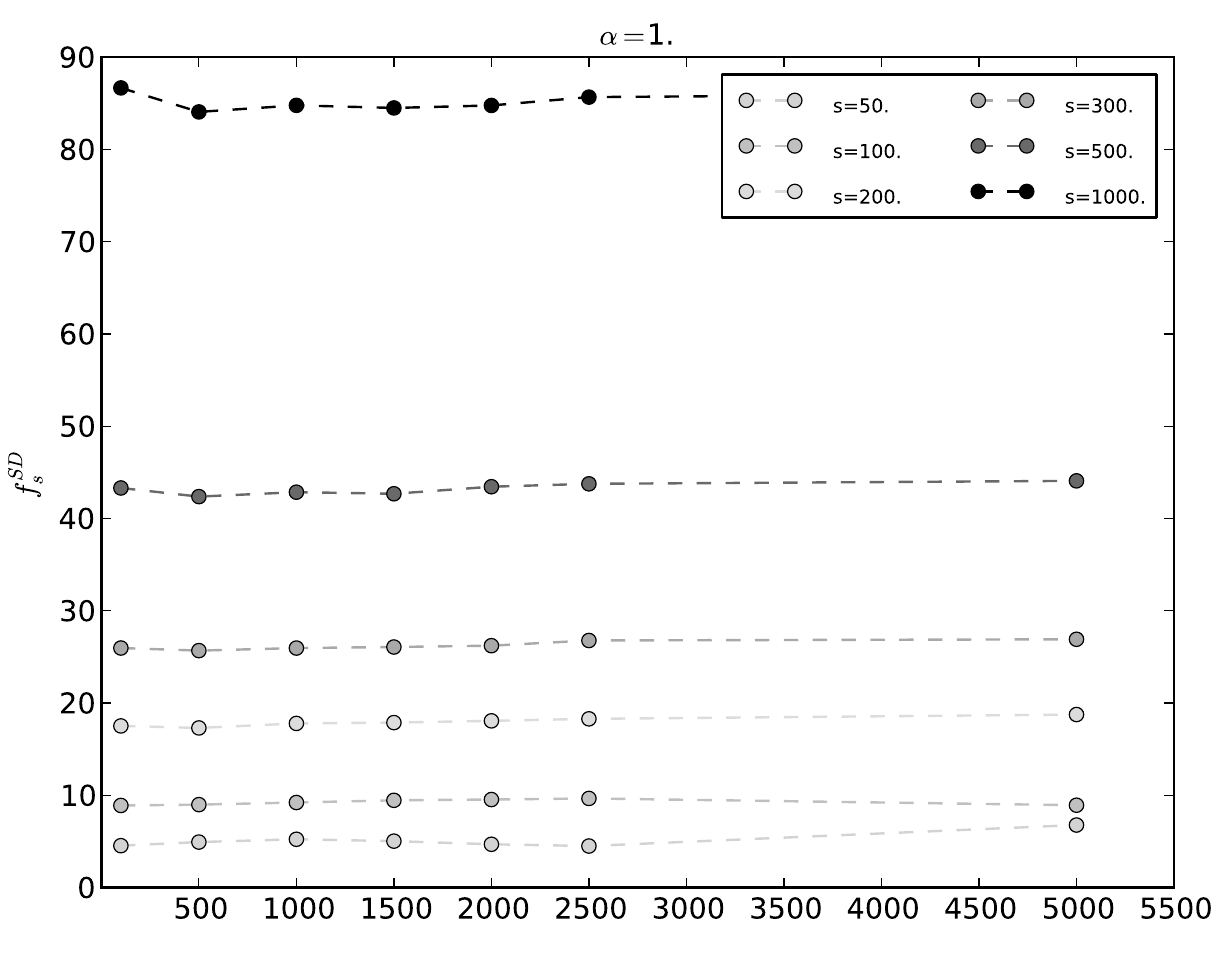}
\caption{Standard deviation for values of $c_{00}$.}
\label{im:sdcoo}
\end{figure}
\section{Discussion}

We conjecture that the general identity
\begin{equation}
\label{eq_simetrycenter2}
\overline{c_{ij}}+\overline{c_{i+1j+1}}=\overline{c_{i+1j}}+\overline{c_{ij+1}}.
\end{equation}
for the averages of the coefficients is satisfied when $i^2 + j^2$ is relatively small with respect to $\sqrt{n}$. This equality suggests that the tropical curve is a small perturbation of a square lattice (whose tropical equation satisfy \eqref{eq_simetrycenter2}).  But, believing that the tropical curve is of degree $d=\sqrt n$ and is almost a grid we can write that $c_{0,-d} = 0$ (because the tropical series is zero on the left side of $[0,1]^2$ and is equal to $c_{0,-d}-dx$ there) and $c_{0,k-d}=c_{0,k+1-d}+\frac{k}{2d}$, because  near $x=\frac{k}{2d}$ we have $$c_{0,k-d}+(k-d)x=c_{0,k+1-d}+(k+1-d)x.$$  Therefore $c_{00}= \frac{(d+1)d}{2d}\sim d=\sqrt n$, which agrees with the experimental evidence.  

Our simulations suggest that the average degree of an $\Omega$-tropical curve through $n$ generic points is approximately $\sqrt n$, but a naive argument to show that by random dropping  of points we get one point per small square fails, see \cite{law}.

\subsection*{Acknowledgements}

We thank to Aldo Guzman who wrote all the code in C++, Mikhail Shkolnikov for several helpful comments concerning during the preparation of the paper and Ernesto Lupercio for the enthusiasm and support. We thank the service teams of Xiuhcoatl and Abacus supercomputers who dealt with technicalities. The computer simulations were performed on Abacus, in the Applied Mathematics Computational Laboratory of High Performance at Mexico, in 2016. 

{\small\bibliography{stb}}

\EditInfo{December 17, 2022}{June 2, 2023}{Jacob Mostovoy and Sergei Chmutov}

\end{document}